\newtheorem{theorem}{Theorem}[section]
\newtheorem{lemma}[theorem]{Lemma}
\numberwithin{equation}{section}
\newtheorem{conjecture}[theorem]{Conjecture}
\newcommand{\Z}{\mathbb{Z}}
\begin{document}


\baselineskip=17pt



\title{On compositeness of special types of integers}

\author{Yu Tsumura}
\address{Department of Mathematics, Purdue University
150 North University Street, West Lafayette, Indiana 47907-2067
}
\email{ytsumura@math.purdue.edu}

\date{}

\begin{abstract}
In paper on a classification of Lehmer triples, Juricevic conjectured that there are infinitely many primes of special form.
We disprove one of his conjectures and consider the other one.
\end{abstract}

\subjclass[2010]{Primary 11A51; Secondary 11B51}

\keywords{Lehmer triples, linear recurrence sequences, compositeness}

\maketitle

\section{Introduction}
In paper \cite{Juricevic}, Juricevic proved a theorem on a classification of Lehmer triples under the assumption of the following two conjectures.
\begin{conjecture}\label{conj1}
There are infinitely many prime numbers $p>5$ such that
\[T(p):=\frac{1}{5}\left( (1+\sqrt{5}) \left( \frac{3+\sqrt{5}}{2} \right)^{2p}+(1-\sqrt{5}) \left( \frac{3-\sqrt{5}}{2} \right)^{2p}+3\right)\]
is a prime number.
\end{conjecture}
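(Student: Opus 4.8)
My first step toward settling Conjecture~\ref{conj1} would be to strip away the appearance of $\sqrt5$ and turn $T(p)$ into a statement about a classical integer sequence. Writing $\phi=\frac{1+\sqrt5}{2}$ and $\psi=\frac{1-\sqrt5}{2}$, one has $\frac{3\pm\sqrt5}{2}=\phi^{2},\psi^{2}$ and $1\pm\sqrt5=2\phi,2\psi$, so the first two summands are exactly $2\phi^{4p+1}$ and $2\psi^{4p+1}$; hence $5T(p)=2L_{4p+1}+3$, where $L_n=\phi^n+\psi^n$ is the $n$-th Lucas number. Applying $5F_mF_n=L_{m+n}-(-1)^nL_{m-n}$ with $m=2p$, $n=2p+1$ (and $L_{-1}=-1$) this collapses to
\[
T(p)=2F_{2p}F_{2p+1}+1,
\]
with $F_n$ the $n$-th Fibonacci number; in particular $T(p)$ is patently an integer. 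Everything after this is a question about $(F_n)$ rather than about $\mathbb{Q}(\sqrt5)$, so I would always work from this form.

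The shape $2F_{2p}F_{2p+1}+1$ immediately invites an attempt to \emph{disprove} the conjecture by exhibiting a congruence (covering) obstruction. Reducing modulo small primes and using the periodicity of $(F_n)$ one finds the clean rules
\[
5\mid T(p)\iff p\equiv1\pmod5,\qquad 11\mid T(p)\iff p\equiv3\pmod5 .
\]
Hence $T(p)$ is composite for every prime $p\equiv1,3\pmod5$, and the only residues that can possibly yield a prime value are $p\equiv2,4\pmod5$. The natural plan is then to try to cover these two classes as well, which would show $T(p)$ composite for all $p>5$ and refute the conjecture outright.

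This is where I expect the real difficulty to sit. Apart from the exceptional prime $5$, a divisibility rule $q\mid T(p)$ is periodic in $p$ with period dividing $5$ only when the Pisano period $\pi(q)$ divides $10$, which forces $q=11$; thus the two period-$5$ primes are already spent on the classes $1,3\pmod5$. Covering $p\equiv2,4\pmod5$ must therefore be done with finitely many longer progressions (modulo $25,35,55,\dots$) backed by primes whose ranks of apparition in $(F_n)$ happen to align. Whether such a finite covering exists is the crux: if it does, Conjecture~\ref{conj1} is false, and if it does not there is no local obstruction at all. I would settle this by a systematic search over primes $q$ and their Fibonacci ranks of apparition.

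If no covering emerges---which the available data already suggest, since $T(7)=131\cdot3511$ is composite but carries no small systematic factor---then proving the conjecture itself lies beyond current technique, and I would instead argue for it heuristically. Because $T(p)\sim\tfrac{2}{5}\phi^{4p+1}$ grows exponentially, a Bateman--Horn/Hardy--Littlewood model gives $T(p)$ a chance of primality of order $1/(p\log\phi)$ after correcting for the local densities found above, and $\sum_{p}1/p$ diverges, so infinitely many prime values are predicted. Turning this into a theorem would require producing infinitely many primes in an exponentially growing nonlinear sequence, a problem on the same footing as the infinitude of Fibonacci or Mersenne primes, and hence not something I would expect to prove unconditionally.
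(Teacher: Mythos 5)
You have not proved Conjecture~\ref{conj1}, but neither does the paper: this statement is one of Juricevic's conjectures, and the author explicitly leaves it open, proving only that there are infinitely many primes $p$ with $T(p)$ \emph{composite} (Theorem~\ref{thm:T}). So there is no proof in the paper to measure you against, and your closing assessment --- that an unconditional proof would be on the same footing as the infinitude of Fibonacci or Mersenne primes --- is the honest state of affairs. Your algebraic reduction is correct and worth keeping: since $\frac{3\pm\sqrt5}{2}=\phi^2,\psi^2$, the paper's sequence $V_n(3,1)$ is exactly $L_{2n}$, and your identity $T(p)=2F_{2p}F_{2p+1}+1$ is an equivalent repackaging of the paper's equation~(\ref{eq:T}); it checks out numerically ($T(2)=2\cdot3\cdot5+1=31$, matching the paper).

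Where your proposal and the paper overlap is in the partial covering: your rules $5\mid T(p)$ for $p\equiv1\pmod5$ and $11\mid T(p)$ for $p\equiv3\pmod5$ are precisely items (1) and (2) of Theorem~\ref{thm:T}, which the paper obtains by tabulating $V_n$ modulo $25$ and $11$. The paper goes further than you do: it also finds $31\mid T(p)$ for $p\equiv2\pmod{15}$, and $131$, $71$ dividing $T(p)$ on progressions modulo $65$ and $35$, whittling the candidate classes down to $p\equiv7,19,29\pmod{30}$; it then observes (by computing a gcd of two values in the same class) that no single prime covers any of these remaining classes, which supports your suspicion that a finite covering system does not exist and that the conjecture cannot be refuted this way. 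Two small cautions: you assert your divisibility rules as ``if and only if'' where only the forward implication is needed or verified in the paper, and your Bateman--Horn heuristic, while reasonable, is of course not evidence that could be spliced into a proof. In short, your proposal is a sound research plan that independently rediscovers the paper's actual theorem as a byproduct, but, like the paper, it leaves Conjecture~\ref{conj1} untouched.
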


\begin{conjecture}\label{conj2}
There are infinitely many prime numbers $p>5$ such that
\[Y(p):=\frac{1}{3}\left( (1+\sqrt{3}) \left( 2+\sqrt{3} \right)^{2p}+(1-\sqrt{3}) \left( 2-\sqrt{3} \right)^{2p}+1 \right)\]
is a prime number.
\end{conjecture}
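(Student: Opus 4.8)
The plan is to \emph{disprove} Conjecture~\ref{conj2} rather than prove it: I would show that $Y(p)$ is composite for every prime $p>5$, so that there are in fact no such primes at all. The mechanism I would hunt for is a \emph{covering congruence}, i.e. a finite list of small primes together with residue classes of $p$ so that each admissible $p$ forces $Y(p)$ to be divisible by one of them.

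First I would strip away the quadratic irrationalities. Because $(2+\sqrt3)(2-\sqrt3)=1$ and $(2+\sqrt3)+(2-\sqrt3)=4$, the integers $V_n=(2+\sqrt3)^n+(2-\sqrt3)^n$ and $U_n=\big((2+\sqrt3)^n-(2-\sqrt3)^n\big)/(2\sqrt3)$ both satisfy $X_n=4X_{n-1}-X_{n-2}$, and $(2+\sqrt3)^n=\tfrac12V_n+U_n\sqrt3$. Expanding $(1\pm\sqrt3)(2\pm\sqrt3)^{2p}$ and adding the two conjugate terms cancels the $\sqrt3$ part, giving $Y(p)=(V_{2p}+6U_{2p}+1)/3$. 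Feeding in the duplication formulas $V_{2p}=V_p^2-2$ and $U_{2p}=U_pV_p$ together with the Pell relation $V_p^2-12U_p^2=4$ collapses this to the clean integral shape
\[ Y(p)=4U_p^2+2U_pV_p+1. \]

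Now the sequences $U_n,V_n$ are periodic modulo any fixed prime, so $Y(p)\bmod q$ depends only on the residue of $p$. I would test $q=3$ and $q=13$. Modulo $3$ the recurrence becomes $X_n\equiv X_{n-1}-X_{n-2}$, with $U_n$ of period $6$ and $V_n$ of period $2$; since $V_p\equiv1$ for odd $p$ one gets $Y(p)\equiv(U_p+1)^2\pmod 3$, which vanishes precisely when $p\equiv5\pmod 6$. Modulo $13$ both sequences have period $12$, and a direct check of the residues $p\equiv1,7\pmod{12}$ shows $13\mid Y(p)$ whenever $p\equiv1\pmod 6$. As every prime $p>5$ is coprime to $6$, hence $\equiv1$ or $5\pmod 6$, one of $3,13$ always divides $Y(p)$; since $Y(p)>13$ for $p>5$ the number is genuinely composite, and Conjecture~\ref{conj2} fails.

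The real work is not the verification --- once the primes are chosen this is a finite computation on periodic sequences --- but \emph{finding} a covering set and proving that two primes already exhaust the residues $1,5\pmod 6$. I would loop over small $q$, compute the period of $X_n=4X_{n-1}-X_{n-2}$ modulo $q$, and record for which classes of $p$ one has $q\mid Y(p)$, hoping a couple of classes tile the admissible residues. Finally it is worth noting why the same idea should \emph{not} dispose of Conjecture~\ref{conj1}: the parallel simplification there gives $T(p)=2F_{2p}F_{2p+1}+1$ with $F_n$ the Fibonacci numbers, and $T(p)$ is already prime for small arguments such as $T(5)=9791$, so no fixed finite covering can exist. That contrast is presumably exactly why one conjecture can be refuted while the other is only ``considered.''
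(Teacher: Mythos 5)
Your proposal is correct and follows essentially the same route as the paper: both disprove the conjecture via the covering congruence $3\mid Y(p)$ for $p\equiv 5\pmod 6$ and $13\mid Y(p)$ for $p\equiv 1\pmod 6$, established through the periodicity of the recurrence $X_n=4X_{n-1}-X_{n-2}$ modulo small primes. The only difference is cosmetic: the paper keeps the form $Y(p)=\frac{1}{3}(3V_{2p}-V_{2p-1}+1)$ and therefore works modulo $9$ and $13$ with periods $6$ and $12$ in the index $2p$, whereas your closed form $Y(p)=4U_p^2+2U_pV_p+1$ clears the denominator first and lets you test divisibility by $3$ directly in the index $p$.
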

In this article we discuss those two types of integers.
Actually, we disprove Conjecture \ref{conj2} and also show that there are infinitely many composite numbers $T(p)$.
The proofs are elementary.
We use simple properties of second-order linear recurrence sequences.

\section{Lemmas}
Here we prove some basic properties of second-order linear recurrence sequences.

First of all, let us set up the situation.
Let $P$, $Q\in\Z\setminus\{0\}$ and $X^2-PX+Q=(X-\alpha)(X-\beta)$.
Hence $\alpha+\beta=P$ and $\alpha\beta=Q$.
Let us define $V_n(P,Q)=\alpha^n+\beta^n$ for integer $n\geq 0$.
Note that we have $V_0(P,Q)=2$ and $V_1(P,Q)=P$.

For simplicity, let $V_n$ denote $V_n(P,Q)$ when $P$, $Q$ are understood.
It is easy to show the next lemma and we omit the proof.
The reader will find many properties of second-order linear recurrence sequences, for example, in \cite{Ribenboim}.
\begin{lemma}\label{lem1}
For $P$, $Q\in\Z\setminus\{0\}$ and $V_n=V_n(P,Q)$, we have
\[V_n=PV_{n-1}-QV_{n-2}.\]
\end{lemma}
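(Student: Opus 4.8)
The plan is to work directly from the closed form $V_n = \alpha^n + \beta^n$ together with the relations $\alpha+\beta = P$ and $\alpha\beta = Q$ coming from the factorization $X^2 - PX + Q = (X-\alpha)(X-\beta)$. The cleanest route is to expand the right-hand side $PV_{n-1} - QV_{n-2}$ and verify that everything collapses to $V_n$.

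First I would substitute the closed forms and the expressions for $P$ and $Q$, writing
\[PV_{n-1} - QV_{n-2} = (\alpha+\beta)(\alpha^{n-1}+\beta^{n-1}) - \alpha\beta(\alpha^{n-2}+\beta^{n-2}).\]
Expanding the first product yields $\alpha^n + \alpha\beta^{n-1} + \beta\alpha^{n-1} + \beta^n$, while the second term expands to $\alpha^{n-1}\beta + \alpha\beta^{n-1}$. Subtracting, the two mixed terms cancel in pairs, leaving exactly $\alpha^n + \beta^n = V_n$, as claimed.

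An equivalent, slightly more conceptual route is to observe that $\alpha$ is a root of $X^2 - PX + Q$, so $\alpha^2 = P\alpha - Q$; multiplying through by $\alpha^{n-2}$ gives $\alpha^n = P\alpha^{n-1} - Q\alpha^{n-2}$, and the identical relation holds for $\beta$. Adding the two identities immediately produces the recurrence. Either way the identity is valid for every $n \geq 2$, and since $Q \neq 0$ the exponent $n-2$ presents no difficulty.

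There is no genuine obstacle here: the statement is a purely formal consequence of the defining relations, and the only thing to watch is the bookkeeping in the cancellation of the cross terms $\alpha\beta^{n-1}$ and $\alpha^{n-1}\beta$, together with the tacit assumption $n \geq 2$ so that $V_{n-2}$ is defined. This is presumably why the authors record the lemma but omit its proof.
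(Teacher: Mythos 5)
Your proof is correct; the paper itself omits the proof of this lemma as "easy to show," and your computation (expanding $(\alpha+\beta)(\alpha^{n-1}+\beta^{n-1})-\alpha\beta(\alpha^{n-2}+\beta^{n-2})$ and cancelling the cross terms, or equivalently summing the relations $\alpha^n=P\alpha^{n-1}-Q\alpha^{n-2}$ and $\beta^n=P\beta^{n-1}-Q\beta^{n-2}$) is exactly the standard argument the authors are alluding to. Your remark about requiring $n\ge 2$ is also the right caveat, consistent with the paper's use of the recurrence only from $V_2$ onward.
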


Now take $t_0$, $t_1\in \Z\setminus\{0\}$.
Let $\gamma=t_1-t_0\beta$ and $\delta=t_1-t_0\alpha$.
\begin{lemma}\label{lem2}
We have 
\[\gamma \alpha^n+\delta \beta^n=t_1V_n-Qt_0V_{n-1}.\]
\end{lemma}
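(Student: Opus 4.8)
The plan is to prove the identity by direct substitution and expansion, using only the definitions $\gamma=t_1-t_0\beta$, $\delta=t_1-t_0\alpha$, the relation $V_n=\alpha^n+\beta^n$, and the fact that $\alpha\beta=Q$. No induction or appeal to Lemma \ref{lem1} should be needed; everything follows from manipulating the two exponential terms algebraically.

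First I would substitute the definitions of $\gamma$ and $\delta$ into the left-hand side to obtain
\[
\gamma\alpha^n+\delta\beta^n=(t_1-t_0\beta)\alpha^n+(t_1-t_0\alpha)\beta^n.
\]
Next I would expand and regroup the four resulting terms according to their coefficients $t_1$ and $t_0$, giving
\[
t_1(\alpha^n+\beta^n)-t_0(\beta\alpha^n+\alpha\beta^n).
\]
The first group is immediately $t_1V_n$ by the definition of $V_n$. For the second group, the key observation is that each summand contains one factor of $\alpha\beta$: writing $\beta\alpha^n=\alpha\beta\cdot\alpha^{n-1}$ and $\alpha\beta^n=\alpha\beta\cdot\beta^{n-1}$, I would factor out $\alpha\beta=Q$ to get $t_0\,\alpha\beta\,(\alpha^{n-1}+\beta^{n-1})=Qt_0V_{n-1}$. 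Combining the two pieces yields $t_1V_n-Qt_0V_{n-1}$, as claimed.

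Since the argument is a one-line computation, there is no genuine obstacle; the only point requiring minor care is the correct pairing in the factorization step, namely recognizing that $\beta\alpha^n$ and $\alpha\beta^n$ each peel off exactly one copy of $\alpha\beta$ to leave $V_{n-1}$ rather than, say, a mismatched pair of exponents. I would also note in passing that the identity holds for all $n\geq 1$, which is the range in which $V_{n-1}$ is defined.
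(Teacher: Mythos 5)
Your proposal is correct and follows exactly the same computation as the paper: substitute the definitions of $\gamma$ and $\delta$, regroup the four terms by $t_1$ and $t_0$, and factor out $\alpha\beta=Q$ to recognize $V_n$ and $V_{n-1}$. The remark that the identity requires $n\geq 1$ is a minor point of care the paper leaves implicit.
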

\begin{proof}
We have 
\begin{eqnarray*}
\gamma \alpha^n+\delta \beta^n &=& t_1\alpha^n-t_0\alpha^n\beta+t_1\beta^n-t_0\alpha\beta^n \\
&=& t_1(\alpha^n+\beta^n)-t_0\alpha\beta(\alpha^{n-1}+\beta^{n-1})\\
&=& t_1V_n-t_0QV_{n-1}.
\end{eqnarray*}

\end{proof}

\section{All $Y(p)$ are composite.}
Now we disprove Conjecture \ref{conj2}.
\begin{theorem}
$Y(p)$ is composite for all primes $p$.
\end{theorem}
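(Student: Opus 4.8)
The plan is to reduce $Y(p)$ to a clean closed form in the sequence $V_n$ and its companion, and then exhibit, for every prime $p\neq 3$, a fixed small prime dividing $Y(p)$; the prime $p=3$ will be treated by hand. First I would apply Lemma \ref{lem2} with $P=4$, $Q=1$ (so $\alpha=2+\sqrt{3}$, $\beta=2-\sqrt{3}$, $\alpha\beta=1$) and with $t_0=1$, $t_1=3$, which forces $\gamma=t_1-t_0\beta=1+\sqrt{3}$ and $\delta=t_1-t_0\alpha=1-\sqrt{3}$. Taking $n=2p$, Lemma \ref{lem2} gives $(1+\sqrt{3})\alpha^{2p}+(1-\sqrt{3})\beta^{2p}=3V_{2p}-V_{2p-1}$, so that $3Y(p)=3V_{2p}-V_{2p-1}+1$. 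Introducing the companion sequence $U_n=(\alpha^n-\beta^n)/(\alpha-\beta)$ (so $U_0=0$, $U_1=1$, $U_n=4U_{n-1}-U_{n-2}$) and using the standard identities $V_{2p}=V_p^2-2$, $U_{2p}=U_pV_p$ and $V_p^2-12U_p^2=4$, I expect this to collapse to the single formula
\[ Y(p)=4U_p^2+2U_pV_p+1. \]
Verifying this identity is routine algebra.

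The key step is the observation that $Y(p)$ is caught by a small covering system. Since $U_n$ and $V_n$ are periodic modulo any fixed prime $\ell$, the map $p\mapsto Y(p)\bmod\ell$ is periodic, and for $\ell=3$ and $\ell=13$ the period turns out to be $6$. Reducing the formula above modulo $3$ one finds $Y(p)\equiv 0\pmod 3$ exactly when $p\equiv 2\pmod 3$; reducing it modulo $13$ one finds $Y(p)\equiv 0\pmod{13}$ exactly when $p\equiv 1\pmod 6$. Both facts reduce to a finite check over one period, which I would carry out by tabulating $U_p$ and $V_p$ modulo $3$ and modulo $13$.

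It then remains to notice that these two congruence classes cover all primes but one. Every prime $p>3$ is $\equiv 1$ or $5\pmod 6$: if $p\equiv 1\pmod 6$ then $13\mid Y(p)$, and if $p\equiv 5\pmod 6$ (equivalently $p\equiv 2\pmod 3$) then $3\mid Y(p)$; the prime $p=2$ is $\equiv 2\pmod 3$, so again $3\mid Y(2)$. Since $Y(p)\geq Y(2)=177$ exceeds both $3$ and $13$, in each case $Y(p)$ has a proper divisor and is therefore composite. The only prime left uncovered is $p=3$, which I would dispatch directly by computing $Y(3)=2461=23\cdot 107$.

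The main obstacle is locating the covering system itself: nothing in the set-up points to the pair $\{3,13\}$ or the modulus $6$, so the real work is the experimental computation of $Y(p)$ for small $p$, factoring those values to spot the recurring primes, and then confirming through the periodicity of the Lucas sequences that these divisibilities persist for all $p$ in the relevant classes. Once the system is guessed, the verification is entirely mechanical, and the lone exceptional prime $p=3$ is settled by inspection.
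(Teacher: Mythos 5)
Your proposal is correct and follows essentially the same route as the paper: the same covering system ($13\mid Y(p)$ for $p\equiv 1\pmod 6$, $3\mid Y(p)$ for $p\equiv 5\pmod 6$ and for $p=2$), obtained from the same application of Lemma \ref{lem2} with $P=4$, $Q=1$, $t_0=1$, $t_1=3$, and with $p=3$ settled by the explicit factorization $Y(3)=23\cdot 107$. The only difference is cosmetic: you reduce the closed form $Y(p)=4U_p^2+2U_pV_p+1$ modulo $3$ and $13$, whereas the paper reduces $3Y(p)=3V_{2p}-V_{2p-1}+1$ modulo $9$ and $13$ directly; both identities are valid and the finite periodicity checks go through.
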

\begin{proof}
We use the lemmas with $P=4$, $Q=1$, $t_0=1$, $t_1=3$.
For these choices, we have $\alpha=2+\sqrt{3}$, $\beta=2-\sqrt{3}$, $\gamma=1+\sqrt{3}$ and $\delta=1-\sqrt{3}$.
By Lemma \ref{lem2}, we have for  integer $n$
\[3V_n-V_{n-1}=\gamma \alpha^n+\delta\beta^n
=(1+\sqrt{3}) \left( 2+\sqrt{3} \right)^{n}+(1-\sqrt{3}) \left( 2-\sqrt{3} \right)^{n}.\]
Hence we have for prime $p$
\begin{equation}\label{eq:Y}
Y(p)=\frac{1}{3}(3V_{2p}-V_{2p-1}+1).
\end{equation}

Now we prove that $Y(p)$ is divisible by 3 when $p \equiv 5 \pmod 6$ and is divisible by $13$ when $p \equiv 1 \pmod {6}$.
Since $Y(2)=3\cdot59$ and $Y(3)=23\cdot107$, this shows that $Y(p)$ is composite for all primes $p$.

We calculate $V_n \pmod 9$ using Lemma \ref{lem1}, that is, $V_n=4V_{n-1}-V_{n-2}$. 
The result is the following list.
\begin{center}
    \begin{tabular}{ | l | l | l | l| l|}
    \hline
    $n  $      & $V_n \pmod 9$          \\ \hline
     $0$           &  $2$         \\ \hline
    $1$           &  $4$      \\ \hline
    $2$           &  $5$             \\ \hline
    $3$           &  $7$      \\ \hline
    $4$           &  $5$      \\ \hline
    $5$           &  $4$           \\ \hline   
    $6$           &  $2$           \\ \hline 
    $7$           &  $4$           \\ \hline 
    \end{tabular}
\end{center}
Now since $V_n$ is calculated from the previous two terms, we see from the list that $V_n \pmod 9$ has period $6$.

Let $p\equiv 5 \pmod 6$.
Then $2p\equiv 4 \pmod 6$ and $2p-1\equiv 3 \pmod 6$.
Hence  by (\ref{eq:Y}) we have
\[3Y(p)\equiv 3V_4-V_3+1 \equiv 3\cdot5-7+1\equiv 0 \pmod 9.\]
Hence $3$ divides $Y(p)$ when $p\equiv 5 \pmod 6$.

Next we calculate $V_n \pmod {13}$ and obtain the following list.
\begin{center}
    \begin{tabular}{ | l | l | l | l| l|}
    \hline
    $n  $      & $V_n \pmod {13}$  &  $n  $      & $V_n \pmod {13}$       \\ \hline
     $0$           &  $2$       & $7$    &  $9$                       \\ \hline
    $1$           &  $4$        & $8$   & $12$                                   \\ \hline
    $2$           &  $1$         & $9$ & $0$                                    \\ \hline
    $3$           &  $0$         & $10$ & $1$                                     \\ \hline
    $4$           &  $12$       & $11$ & $4$                                    \\ \hline
    $5$           &  $9$        & $12$ & $2$                                  \\ \hline   
    $6$           &  $11$       & $13$ & $4$                                    \\ \hline 
                                           
    \end{tabular}
\end{center}
We see that $V_n \pmod {13}$ has  period $12$.

Let $p \equiv 1 \pmod {6}$.
Then  we have $2p \equiv 2 \pmod {12}$ and $2p-1\equiv 1 \pmod {12}$.
Hence by (\ref{eq:Y}) we have
\[3Y(p)\equiv 3V_2-V_1+1 \equiv 3\cdot 1 -4+1\equiv0 \pmod {13}.\]
It follows that $Y(p)$ is divisible by $13$ when $p \equiv 1 \pmod {6}$.

As noted above, this shows that $Y(p)$ is composite for all primes $p$.
\end{proof}

\section{There are infinitely many composite $T(p)$.}
Let us consider $T(p)$.
Now $T(p)$ is not composite for all primes $p$.
For example, $T(p)$ is prime for $p=2$, $5$, $809$.
(These are the only  primes the author found.)

Although we neither prove nor disprove Conjecture \ref{conj1}, we can show that there are infinitely many primes $p$ such that $T(p)$ is composite.
\begin{theorem}\label{thm:T}
Let $p$ be a prime number.
\begin{enumerate}
	\item If $p \equiv 1\pmod {5}$, then $T(p)$ is divisible by $5$.
	\item If $p \equiv 3\pmod {5}$, then $T(p)$ is divisible by $11$.
	\item If $p \equiv 2\pmod {15}$, then $T(p)$ is divisible by $31$.
\end{enumerate}
\end{theorem}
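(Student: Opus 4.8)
The plan is to mirror the proof given for $Y(p)$. First I would invoke Lemmas \ref{lem1} and \ref{lem2} with the parameters $P=3$, $Q=1$, $t_0=2$, $t_1=4$. These yield $\alpha=\frac{3+\sqrt5}{2}$, $\beta=\frac{3-\sqrt5}{2}$, $\gamma=1+\sqrt5$, $\delta=1-\sqrt5$, which match exactly the quantities appearing in the definition of $T(p)$ (note $\alpha+\beta=3=P$ and $\alpha\beta=1=Q$). Lemma \ref{lem2} then gives
\[(1+\sqrt5)\left(\frac{3+\sqrt5}{2}\right)^{n}+(1-\sqrt5)\left(\frac{3-\sqrt5}{2}\right)^{n}=4V_n-2V_{n-1},\]
so that for a prime $p$ we obtain the key identity
\[5T(p)=4V_{2p}-2V_{2p-1}+3,\]
where $V_n=V_n(3,1)$ satisfies $V_n=3V_{n-1}-V_{n-2}$ by Lemma \ref{lem1}. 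Everything then reduces to finite periodicity computations, exactly as in the previous section.

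For parts (2) and (3) the divisor ($11$ or $31$) is coprime to the $5$ in the denominator, so $11\mid T(p)$ (resp. $31\mid T(p)$) is equivalent to $11\mid 5T(p)$ (resp. $31\mid 5T(p)$); hence it suffices to compute $V_n$ modulo $11$ and modulo $31$. I would tabulate $V_n\bmod 11$, observe that the pair $(V_n,V_{n-1})$ first repeats with period $5$, and then for $p\equiv3\pmod5$ reduce the indices $2p\equiv1$ and $2p-1\equiv0$ modulo $5$ to check $4V_1-2V_0+3\equiv0\pmod{11}$. Similarly I would tabulate $V_n\bmod 31$, find period $15$, and for $p\equiv2\pmod{15}$ reduce $2p\equiv4$ and $2p-1\equiv3$ modulo $15$ to verify $4V_4-2V_3+3\equiv0\pmod{31}$.

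The one place requiring extra care is part (1), where the divisor $5$ coincides with the denominator of $T(p)$. Here $5\mid T(p)$ is equivalent to $25\mid 5T(p)=4V_{2p}-2V_{2p-1}+3$, so I must work modulo $25$ rather than modulo $5$. I would build the table of $V_n\bmod 25$, note that $(V_n,V_{n-1})$ has period $10$, and for $p\equiv1\pmod5$ use $2p\equiv2$ and $2p-1\equiv1$ modulo $10$ to confirm $4V_2-2V_1+3\equiv0\pmod{25}$. This lift from modulus $5$ to modulus $25$ is the only conceptual subtlety; the remainder is the same bookkeeping as in the $Y(p)$ argument. The main (and rather mild) obstacle is simply keeping the index reductions straight, namely remembering that the period of $V_n$ modulo $m$ dictates which residues of $2p$ and $2p-1$ occur, and confirming that the chosen congruence class of $p$ lands on precisely the indices where the tabulated values force the expression $4V_{2p}-2V_{2p-1}+3$ to vanish modulo the relevant number.
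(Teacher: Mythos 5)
Your proposal is correct and follows essentially the same route as the paper: the same choice of parameters $P=3$, $Q=1$, $t_0=2$, $t_1=4$ in Lemmas \ref{lem1} and \ref{lem2}, the same identity $5T(p)=4V_{2p}-2V_{2p-1}+3$, and the same periodicity tables modulo $25$, $11$, and $31$ with the same index reductions. Your explicit remarks on why part (1) requires lifting to modulus $25$ and why coprimality of $5$ with $11$ and $31$ suffices for parts (2) and (3) are points the paper leaves implicit, but the argument is identical.
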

\begin{proof}
We use the above lemmas with $P=3$, $Q=1$, $t_0=2$, $t_1=4$.
With these values, we have $\alpha=(3+\sqrt{5})/2$, $\beta=(3-\sqrt{5})/2$, $\gamma=1+\sqrt{5}$ and $\delta=1-\sqrt{5}$.
By Lemma \ref{lem2}, it follows that for  integer $n$
\[4V_{n}-2V_{n}=\gamma \alpha^n+\delta \beta^n=(1+\sqrt{5}) \left( \frac{3+\sqrt{5}}{2} \right)^{n}+(1-\sqrt{5}) \left( \frac{3-\sqrt{5}}{2} \right)^{n}.\]
Hence we have 
\begin{equation}\label{eq:T}
T(p)=\frac{1}{5}(4V_{2p}-2V_{2p-1}+3)
\end{equation}
We calculate $V_n \pmod {25}$ using Lemma \ref{lem1}, that is, $V_n=3V_{n-1}-V_{n-2}$.
\begin{center}
    \begin{tabular}{ | l | l | l | l| l|}
    \hline
    $n  $      & $V_n \pmod {25}$  &  $n  $      & $V_n \pmod {25}$       \\ \hline
     $0$           &  $2$       & $6$    &  $22$                       \\ \hline
    $1$           &  $3$        & $7$   & $18$                                   \\ \hline
    $2$           &  $7$         & $8$ & $7$                                    \\ \hline
    $3$           &  $18$         & $9$ & $3$                                     \\ \hline
    $4$           &  $22$       & $10$ & $2$                                    \\ \hline
    $5$           &  $23$        & $11$ & $3$                                  \\ \hline                                              
    \end{tabular}
\end{center}
Since $V_n$ is calculated from the previous two terms, we see that $V_n \pmod{25}$ has period $10$.
When $p \equiv 1\pmod {5}$, we have $2p \equiv 2\pmod {10}$ and $2p-1 \equiv 1\pmod {10}$.
So by (\ref{eq:T}) it follows that
\[5T(p)\equiv 4V_2-2V_1+3 \equiv 4\cdot7-2\cdot 3+3\equiv 0 \pmod {25}.\]
Therefore $T(p)$ is divisible by $5$ when $p \equiv 1\pmod {5}$.

Next, we calculate $V_n \pmod{11}$ and obtain the following list.
\begin{center}
    \begin{tabular}{ | l | l | l | l| l|}
    \hline
    $n  $      & $V_n \pmod {11}$          \\ \hline
     $0$           &  $2$         \\ \hline
    $1$           &  $3$      \\ \hline
    $2$           &  $7$             \\ \hline
    $3$           &  $7$      \\ \hline
    $4$           &  $3$      \\ \hline
    $5$           &  $2$           \\ \hline   
    $6$           &  $3$           \\ \hline 
        \end{tabular}
\end{center}
Hence we see that $V_n \pmod {11}$ has  period $5$.
When $p\equiv 3 \pmod 5$, we have $2p\equiv 1 \pmod 5$ and $2p-1\equiv 0 \pmod 5$.
So it follows from (\ref{eq:T}) that
\[5T(p) \equiv 4V_1-2V_0+3 \equiv 4\cdot3-2\cdot 2+3\equiv 0 \pmod {11}.\]
Therefore $T(p)$ is divisible by $11$ when $p \equiv 3\pmod {5}$.

Finally, we calculate $V_n \pmod {31}$ and obtain the following list.
\begin{center}
    \begin{tabular}{ | l | l | l | l| l|}
    \hline
    $n  $      & $V_n \pmod {31}$  &  $n  $      & $V_n \pmod {31}$       \\ \hline
     $0$           &  $2$       & $9$    &  $12$                       \\ \hline
    $1$           &  $3$        & $10$   & $30$                                   \\ \hline
    $2$           &  $7$         & $11$ & $16$                                    \\ \hline
    $3$           &  $18$         & $12$ & $18$                                     \\ \hline
    $4$           &  $16$       & $13$ & $7$                                    \\ \hline
    $5$           &  $30$        & $14$ & $3$                                  \\ \hline   
    $6$           &  $12$       & $15$ & $2$                                    \\ \hline 
    $7$           &  $6$       & $16$ & $3$                                    \\ \hline    
    $8$           &  $6$       & &                                  \\ \hline                                    
    \end{tabular}
\end{center}
Hence we see that $V_n \pmod {31}$ has  period $15$.
When $p\equiv 2 \pmod {15}$, we have  $2p\equiv 4 \pmod {15}$ and  $2p-1\equiv 3 \pmod {15}$.
So it follows from (\ref{eq:T}) that
\[5T(p)\equiv 4V_4-2V_3+3 \equiv 4\cdot16-2\cdot 18+3\equiv 0 \pmod {31}.\]
Hence $T(p)$ is divisible by $31$ when $p \equiv 2\pmod {15}$
\end{proof}

By equation (\ref{eq:T}), it is easy to see that $T(p)$ goes to infinity as $p$ tends to infinity.
Also, by Dirichlet's theorem on arithmetic progressions, there are infinitely many primes $p$ of each of the  three forms in Theorem \ref{thm:T}.
Therefore, it follows that there are infinitely many prime numbers $p$ such that $T(p)$ is composite.

According to Theorem \ref{thm:T}, if $T(p)$ is prime for $p>5$, then $p$ is congruent to one of $7$, $19$, $29 \pmod{30}$.
However, there is no prime $q$ so that $q$ divides $T(p)$ for all $p$ congruent to any one of  $7$, $19$, $29$ $\pmod{30}$.
This can be easily seen by taking two primes in the same class and observing that the greatest common divisor of the two $T(p)$s is equal to $1$.

For other classes of $p$, there might be a trivial divisor as in Theorem \ref{thm:T} and can be proved by the same method.
For example, we can prove the following theorem.
\begin{theorem}
\begin{enumerate}
	\item If $p\equiv 7 \pmod{65}$, then $T(p)$ is divisible by $131$.
	\item If $p\equiv 29 \pmod{35}$, then $T(p)$ is divisible by $71$.
\end{enumerate}
\end{theorem}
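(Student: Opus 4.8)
The plan is to proceed exactly as in the proof of Theorem \ref{thm:T}, using the same recurrence $V_n=V_n(3,1)$ with $V_n=3V_{n-1}-V_{n-2}$, $V_0=2$, $V_1=3$, and the identity (\ref{eq:T}), namely $5T(p)=4V_{2p}-2V_{2p-1}+3$. For each part I reduce $5T(p)$ modulo the claimed prime divisor and show that it vanishes, which forces the prime to divide $T(p)$ since it is coprime to $5$.

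First I would record two structural facts that organize and shorten the computation. Since $Q=1$ we have $\beta=\alpha^{-1}$, hence $V_{-n}=\alpha^{-n}+\beta^{-n}=\beta^n+\alpha^n=V_n$ for all $n$. Moreover the discriminant of $X^2-3X+1$ is $5$, and $5$ is a quadratic residue modulo both $131$ and $71$ (by quadratic reciprocity, using $131\equiv 71\equiv 1\pmod 5$). Therefore $\alpha,\beta$ lie in $\F_{131}$ and in $\F_{71}$ respectively, and Fermat's little theorem gives $\alpha^{130}=\beta^{130}=1$ in $\F_{131}$ and $\alpha^{70}=\beta^{70}=1$ in $\F_{71}$. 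Consequently $V_n\pmod{131}$ has period dividing $130$ and $V_n\pmod{71}$ has period dividing $70$; in particular one need not tabulate a full period, and the moduli $65$ and $35$ appearing in the hypotheses are exactly the halves of $130$ and $70$.

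For part (1), if $p\equiv 7\pmod{65}$ then $2p\equiv 14$ and $2p-1\equiv 13\pmod{130}$, so $V_{2p}\equiv V_{14}$ and $V_{2p-1}\equiv V_{13}\pmod{131}$. It then remains to compute $V_{13},V_{14}\pmod{131}$ from the recurrence and to check that $4V_{14}-2V_{13}+3\equiv 0\pmod{131}$, which gives $131\mid 5T(p)$ and hence $131\mid T(p)$ since $\gcd(5,131)=1$. For part (2), if $p\equiv 29\pmod{35}$ then $2p\equiv 58$ and $2p-1\equiv 57\pmod{70}$; using the symmetry $V_{58}=V_{-12}=V_{12}$ and $V_{57}=V_{-13}=V_{13}$ modulo $71$, it suffices to compute the small-index values $V_{12},V_{13}\pmod{71}$ and verify $4V_{12}-2V_{13}+3\equiv 0\pmod{71}$, whence $71\mid T(p)$.

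There is no genuine conceptual obstacle here: the argument is identical in structure to Theorem \ref{thm:T}, and the only difference is that the relevant periods ($130$ and $70$) are larger. The observations that $V_{-n}=V_n$ and that the period divides the order of $\alpha$, a divisor of $p_0-1$ for the prime modulus $p_0$, keep the tables short, so the remaining work is the routine verification of the two displayed congruences.
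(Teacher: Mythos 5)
Your proposal is correct and follows essentially the same route as the paper: reduce $5T(p)=4V_{2p}-2V_{2p-1}+3$ modulo $131$ (resp.\ $71$) using the periodicity of $V_n$ and verify the resulting congruence, and indeed $V_{14}\equiv 103$, $V_{13}\equiv 11 \pmod{131}$ and $V_{12}\equiv 22$, $V_{13}\equiv 10 \pmod{71}$ make both displayed sums vanish. Your two additions --- bounding the period by $p_0-1$ via quadratic reciprocity and Fermat rather than observing it empirically, and using $V_{-n}=V_n$ to shrink the indices in part (2) --- are pleasant refinements but do not change the substance of the argument.
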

\begin{proof}
The method of proof is exactly the same as that of in the proof of Theorem \ref{thm:T}.
Also since  periods are larger, we just sketch the proof.

One finds that $T(p) \pmod{131}$ has  period $65$.
When $p\equiv 7 \pmod{65}$, we have $2p\equiv 14 \pmod{65}$ and $2p-1\equiv 13 \pmod{65}$.
By direct calculations we see that $V_{14}\equiv 103 \pmod{131}$ and $V_{13}\equiv 11 \pmod{131}$.
Hence it follows from (\ref{eq:T}) that 
\[5T(p)\equiv4V_{14}-2V_{13}+3\equiv 4\cdot 103-2\cdot11+3\equiv 0 \pmod{131}.\]
Hence $T(p)$ is divisible by $131$.

The proof of the second statement is similar.
We see that $T(p) \pmod{71}$ has  period $35$.
When $p\equiv 29 \pmod{35}$, we have $2p\equiv 23 \pmod{35}$ and $2p-1\equiv 22 \pmod{35}$.
By direct calculations we see that $V_{23}\equiv 22 \pmod{71}$ and $V_{21}\equiv 10 \pmod{71}$.
Hence it follows from (\ref{eq:T}) that 
\[5T(p)\equiv4V_{23}-2V_{22}+3\equiv 4\cdot 22-2\cdot10+3\equiv 0 \pmod{71}.\]
Therefore $T(p)$ is divisible by $71$.
\end{proof}

Although we can find more similar divisibility properties, we refrain from stating them.
Finally, one can notice that we do not use the fact that $p$ is prime. 
All the arguments hold if one allows $p$ to be composite.

\providecommand{\bysame}{\leavevmode\hbox to3em{\hrulefill}\thinspace}
\providecommand{\MR}{\relax\ifhmode\unskip\space\fi MR }
\providecommand{\MRhref}[2]{%
  \href{http://www.ams.org/mathscinet-getitem?mr=#1}{#2}
}
\providecommand{\href}[2]{#2}


\begin{thebibliography}{1}

\bibitem{Juricevic}
Robert Juricevic, \emph{Classifying {L}ehmer triples}, Acta Arith. \textbf{137}
  (2009), no.~3, 207--232. \MR{MR2496461}

\bibitem{Ribenboim}
Paulo Ribenboim, \emph{The little book of bigger primes}, second ed.,
  Springer-Verlag, New York, 2004. \MR{MR2028675 (2004i:11003)}

\end{thebibliography}
\end{document}